\documentclass{article}
\usepackage{amsmath}
\usepackage{amsfonts}
\usepackage{amsthm}
\usepackage{amssymb}
\usepackage{color}

\newtheorem{theorem}{Theorem}[section]

\newtheorem{observation}[theorem]{Observation}

\theoremstyle{definition}
\newtheorem{definition}[theorem]{Definition}
\theoremstyle{remark}
\newtheorem{remark}[theorem]{Remark}

\oddsidemargin 0.4truecm   
\evensidemargin 0pt \marginparwidth 40pt \marginparsep 10pt

\topmargin -1.7truecm \headsep 40pt \textheight 21.5truecm
\textwidth 15truecm


\def\f2{\mathbb{F}_2}

\def\dist{\hskip0.02cm{\rm dist}\hskip0.01cm}
\def\lip{\hskip0.02cm{\rm Lip}\hskip0.01cm}

\newcommand{\lin}{{\rm lin}\hskip0.02cm}

\begin{document}
\title{\LARGE{Embeddability of locally finite metric spaces into
Banach spaces is finitely determined }}

\author{M.\,I.~Ostrovskii}

\date{\today}
\maketitle

\noindent{\bf Abstract.} The main purpose of the paper is to prove
the following results:
\begin{itemize}
\item Let $A$ be a locally finite metric space whose finite
subsets admit uniformly bilipschitz embeddings into a Banach space
$X$. Then $A$ admits a bilipschitz embedding into $X$.

\item Let $A$ be a locally finite metric space whose finite
subsets admit uniformly coarse embeddings into a Banach space $X$.
Then $A$ admits a coarse embedding into $X$.

\end{itemize}

These results generalize previously known results of the same type
due to Brown--Guentner (2005), Baudier (2007), Baudier--Lancien
(2008), and the author (2006, 2009).\medskip

One of the main steps in the proof is: each locally finite subset
of an ultraproduct $X^\mathcal{U}$ admits a bilipschitz embedding
into $X$. We explain how this result can be used to prove
analogues of the main results for other classes of embeddings.
\medskip

\noindent{\bf 2010 Mathematics Subject Classification:} Primary:
46B85; Secondary: 05C12, 46B08, 46B20, 54E35

\begin{large}

\section{Introduction}

First we introduce necessary definitions:

\begin{definition} {\rm A metric space $(A,d_A)$ is called {\it discrete} if there exists a
constant $\delta > 0$ such that $\forall u,v\in A\quad
d_A(u,v)\ge\delta$. A discrete metric space $A$ is called {\it
locally finite} if for every $u\in A$ and every $r>0$ the set
$\{a\in A:~d_A(u,a)\le r\}$ is finite.\smallskip

Let $C<\infty$. A map $f: (A,d_A)\to (Y,d_Y)$ between two metric
spaces is called $C$-{\it Lipschitz}\label{lip} if \[\forall
u,v\in A\quad d_Y(f(u),f(v))\le Cd_A(u,v).\] A map $f$ is called
{\it Lipschitz} if it is $C$-Lipschitz for some $C<\infty$. For a
Lipschitz map $f$ we define its {\it Lipschitz
constant}\label{lipC} by
\[\lip f:=\sup_{d_A(u,v)\ne 0}\frac{d_Y(f(u),
f(v))}{d_A(u,v)}.\label{lipCE}\]

A map $f:A\to Y$ is called a {\it $C$-bilipschitz embedding} if
there exists $r>0$ such that
\begin{equation}\label{E:MapDist}\forall u,v\in A\quad rd_A(u,v)\le
d_Y(f(u),f(v))\le rCd_A(u,v).\end{equation} A {\it bilipschitz
embedding} is an embedding which is $C$-bilipschitz for some
$C<\infty$. The smallest constant $C$ for which there exist $r>0$
such that \eqref{E:MapDist} is satisfied is called the {\it
distortion} of $f$. A sequence of embeddings is called {\it
uniformly bilipschitz} if they have uniformly bounded
distortions.\smallskip

A map $f:(X,d_X)\to (Y,d_Y)$ between two metric spaces is called a
{\it coarse embedding} if there exist non-decreasing functions
$\rho_1,\rho_2:[0,\infty)\to[0,\infty)$ (observe that this
condition implies that $\rho_2$ has finite values) such that
$\lim_{t\to\infty}\rho_1(t)=\infty$ and
\begin{equation}\label{E:coarse}\forall u,v\in X~ \rho_1(d_X(u,v))\le
d_Y(f(u),f(v))\le\rho_2(d_X(u,v)).\end{equation} A sequence of
embeddings is called {\it uniformly coarse} if all of them satisfy
\eqref{E:coarse} with the same $\rho_1$ and $\rho_2$.}
\end{definition}

The main purpose of this paper is to prove the following two
results:

\begin{theorem}\label{T:bilip} Let $A$ be a locally finite metric space whose
finite subsets admit uniformly bilipschitz embeddings into a
Banach space $X$. Then $A$ admits a bilipschitz embedding into
$X$.
\end{theorem}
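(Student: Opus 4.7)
My plan is to deduce Theorem~\ref{T:bilip} from the auxiliary lemma highlighted in the abstract, namely that every locally finite subset of an ultrapower $X^{\mathcal{U}}$ admits a bilipschitz embedding back into $X$. Granted this lemma, the proof splits cleanly in two steps.

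\textbf{Step 1 (pass to an ultrapower).} Local finiteness forces $A$ to be countable: fix any $a_1\in A$, then $A=\bigcup_n B(a_1,n)$ is a countable union of finite sets, so one may enumerate $A=\{a_1,a_2,\ldots\}$ and set $A_n=\{a_1,\ldots,a_n\}$. By hypothesis, there exist a constant $C<\infty$ and maps $f_n\colon A_n\to X$ of distortion $\le C$; after rescaling and translating one may assume
\[ d(u,v)\le\|f_n(u)-f_n(v)\|\le C\,d(u,v)\quad\text{for all }u,v\in A_n,\qquad f_n(a_1)=0. \]
Setting $f_n(a_k):=0$ when $n<k$, the sequence $(f_n(a_k))_{n\ge 1}$ is bounded by $C\,d(a_1,a_k)$, so for any free ultrafilter $\mathcal{U}$ on $\mathbb{N}$ the class $F(a_k):=[(f_n(a_k))]\in X^{\mathcal{U}}$ is well defined. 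Since the two inequalities above eventually hold on every fixed pair of points, passing to the ultrafilter limit yields $d(u,v)\le\|F(u)-F(v)\|_{X^{\mathcal{U}}}\le C\,d(u,v)$ for all $u,v\in A$. Hence $F(A)$ is a locally finite subset of $X^{\mathcal{U}}$, bilipschitz equivalent to $A$.

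\textbf{Step 2 (collapse the ultrapower).} The auxiliary lemma supplies a bilipschitz embedding $g\colon F(A)\to X$, and $g\circ F$ is a bilipschitz embedding of $A$ into $X$, whose distortion is bounded by the product of the two distortions.

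\textbf{Where the difficulty lies.} All nontrivial work is concentrated in the auxiliary lemma, which I would prove as follows. Given locally finite $L\subset X^{\mathcal{U}}$, fix $x_0\in L$ and partition the remaining points into dyadic annular shells $L_k:=L\cap\{x:\,2^k\le\|x-x_0\|<2^{k+1}\}$, each of which is finite. Because ultrapowers are finitely representable in $X$, each $L_k\cup\{x_0\}$ admits a $(1+\varepsilon_k)$-bilipschitz embedding into $X$, with $(\varepsilon_k)$ chosen summable. The delicate step is the \emph{gluing}: distances between points lying in different shells must be preserved up to a universal constant, even though no single shell-embedding sees such a pair. I would combine the shell maps in a single copy of $X$ by a direct-sum-type construction anchored at $x_0$, with scale-dependent weights, and control cross-shell distances by a telescoping/triangle-inequality argument that exploits the doubling scale $\|x-x_0\|\asymp 2^k$ on $L_k$. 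Arranging for this direct sum to live inside $X$ itself---rather than in an abstract $\ell_p$-sum of copies of $X$---is the point where finite representability must be invoked one further time with a controlled loss, and is where I expect most of the technical effort to concentrate.
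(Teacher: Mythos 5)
Your Step 1 (pushing the uniformly bilipschitz maps on the balls, or on any exhausting finite sets, into an ultrapower $X^{\mathcal{U}}$ via a free ultrafilter) is exactly how the paper begins, and it is correct. But the paper itself points out (Remark \ref{R:MAIN}) that this reduction is the easy, even inessential, part: the entire content of Theorem \ref{T:bilip} is the ``auxiliary lemma'' that a locally finite subset of $X^{\mathcal{U}}$ embeds bilipschitzly into $X$. You do not prove that lemma; you only sketch a strategy and explicitly defer ``where most of the technical effort will concentrate.'' So as it stands the proposal has a genuine gap precisely at the heart of the theorem.

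Moreover, the strategy you sketch is unlikely to work as described. You partition $L$ into \emph{disjoint} dyadic annuli and embed each annulus by an unrelated $(1+\varepsilon_k)$-embedding; then two points lying in adjacent annuli but very close to the common boundary (their mutual distance can be arbitrarily small compared with $2^k$) are handled by two maps that know nothing about each other, and no triangle-inequality or telescoping argument anchored at $x_0$ can restore either the upper or the lower Lipschitz bound for such pairs. The paper avoids this by working with the \emph{nested} balls $N_i=\{u:\|u\|\le 2^i\}$, taking almost isometric maps $s_i:N_i\to X$ from finite representability, and defining $\varphi$ as a convex interpolation between $s_i$ and $s_{i+1}$ on the annulus $2^{i-1}\le\|a\|\le 2^i$, so that nearby points in adjacent annuli are always seen by a common map $s_{i+1}$. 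The lower bound is then not a soft estimate: one must pass to subsequences so that $s_i(a)$ converges weak$^*$ (in the dual of a separable $1$-norming subspace $M\subset X^*$, in the spirit of Kadets--Pe{\l}czy\'nski) to limits $m(a)$, and use carefully chosen norming functionals to rule out cancellation between the two interpolated terms. Finally, your idea of a ``direct-sum-type construction living inside $X$'' has no justification for a general Banach space $X$ (which may be hereditarily indecomposable, not even isomorphic to its hyperplanes); the paper instead records the radial information either by one extra coordinate in $X\oplus\mathbb{R}$ or, when $X$ is not isomorphic to its hyperplanes, by adding a Lipschitz path $\tau(\|a\|)$ built from Riesz-lemma vectors $p_i$ far from the finite-dimensional subspaces spanned by the images already used. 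These are the missing ingredients your proposal would need to become a proof.
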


\begin{theorem}\label{T:UnCoarse} Let $A$ be a locally finite metric space whose
finite subsets admit uniformly coarse embeddings into a Banach
space $X$. Then $A$ admits a coarse embedding into $X$.
\end{theorem}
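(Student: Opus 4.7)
The plan is to parallel the bilipschitz approach: enlarge the target to an ultrapower $X^{\mathcal{U}}$ in which a single ``limit'' coarse embedding of all of $A$ can be assembled, and then push this embedding back down into $X$ via a coarse analogue of the ultraproduct principle announced in the abstract.

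First I enumerate $A = \{a_1, a_2, \dots\}$, fix $a_1$ as a basepoint, and set $A_n = \{a_1, \dots, a_n\}$. The hypothesis supplies coarse embeddings $f_n : A_n \to X$ satisfying \eqref{E:coarse} with a common pair of moduli $\rho_1, \rho_2$. After translating in $X$ we may assume $f_n(a_1) = 0$, so $\|f_n(a)\|_X \leq \rho_2(d_A(a_1, a))$ for every $a \in A_n$. Extending $f_n$ by $0$ off $A_n$, each sequence $(f_n(a))_{n \in \mathbb{N}}$ is bounded in $X$, so for any nonprincipal ultrafilter $\mathcal{U}$ on $\mathbb{N}$ the formula
\[
F : A \to X^{\mathcal{U}}, \qquad F(a) = [f_n(a)]_n,
\]
is well-defined. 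Any pair $a, b \in A$ eventually lies in $A_n$, so both halves of \eqref{E:coarse} pass to the ultralimit and
\[
\rho_1(d_A(a,b)) \;\leq\; \|F(a) - F(b)\|_{X^{\mathcal{U}}} \;\leq\; \rho_2(d_A(a,b)), \qquad a,b \in A,
\]
making $F$ a coarse embedding of $A$ into $X^{\mathcal{U}}$ with the same moduli. Because $\rho_1(t)\to\infty$ and $A$ is locally finite, every closed ball in $X^{\mathcal{U}}$ meets $F(A)$ in a finite set, so $F$ already transports the local-finiteness of $A$ into $X^{\mathcal{U}}$ up to (at worst) a uniform separation issue that can be patched by an arbitrarily small perturbation in independent directions of $X^{\mathcal{U}}$.

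It remains to push $F$ back down into $X$. The bilipschitz form of this step, namely that every locally finite subset of $X^{\mathcal{U}}$ bilipschitz embeds into $X$, is the central lemma advertised in the abstract; the statement actually needed here is its coarse analogue: any locally finite metric space that coarsely embeds into $X^{\mathcal{U}}$ also coarsely embeds into $X$, with moduli comparable to the originals. Composing this analogue with $F$ finishes the proof. I expect the genuine difficulty to lie entirely in this coarse ultraproduct lemma. The bilipschitz version rests on uniform two-sided linear control of distances, which survives ultralimits in a very rigid way; in the coarse setting the two-sided control passes only through the possibly nonlinear and possibly discontinuous moduli $\rho_1,\rho_2$. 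The natural strategy is to exhaust the locally finite source by finite subsets, use ultrafilter selection to realize each finite piece inside $X$ up to a tolerance tuned to the current scale, and then glue these partial maps into one global map into $X$; the delicate point is to organize the gluing so that pairs at large $d_A$-distance remain separated by a function equivalent to $\rho_1$, which typically forces one to distribute the successive pieces along nearly disjoint directions inside a mildly enlarged copy of $X$.
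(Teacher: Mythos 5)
Your first step (assembling the $f_n$ into a map $F:A\to X^{\mathcal{U}}$ with the same moduli $\rho_1,\rho_2$) is correct and coincides with the paper's. The genuine gap is in the second step: you reduce the theorem to a ``coarse ultraproduct lemma'' (every locally finite metric space that coarsely embeds into $X^{\mathcal{U}}$ coarsely embeds into $X$) which you do not prove -- you explicitly say you expect the whole difficulty to lie there and offer only a gluing strategy, ending with the suggestion of distributing pieces ``inside a mildly enlarged copy of $X$'', which is not even a legitimate target since the embedding must land in $X$ itself. As written, the hard part of the theorem has simply been deferred to an unproved statement.

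The point you miss is that no coarse analogue of the ultraproduct lemma is needed. Since $\rho_1(t)\to\infty$ and $A$ is locally finite, the image $N=F(A)$, with the metric induced from $X^{\mathcal{U}}$, is itself a locally finite subset of $X^{\mathcal{U}}$: if $\|F(a)-F(b)\|\le r$ then $\rho_1(d_A(a,b))\le r$, so $d_A(a,b)\le\sup\{t:\rho_1(t)\le r\}<\infty$, and hence every ball of $N$ (in particular each set $N_i=\{u\in N:\|u\|_{X^{\mathcal{U}}}\le 2^i\}$ used in the construction) is finite. Therefore the \emph{bilipschitz} lemma already established in the proof of Theorem \ref{T:bilip} (see Remark \ref{R:MAIN}: every locally finite subset of $X^{\mathcal{U}}$ admits a bilipschitz embedding into $X$) applies verbatim to $N$, and the composition of the coarse embedding $F:A\to N$ with a bilipschitz embedding of $N$ into $X$ satisfies estimates of the form \eqref{E:coarse} with moduli obtained from $\rho_1,\rho_2$ by rescaling, i.e.\ it is a coarse embedding of $A$ into $X$. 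This is exactly the paper's argument. Relatedly, your proposed ``arbitrarily small perturbation in independent directions'' to restore uniform separation of $F(A)$ is both unjustified and unnecessary: the bilipschitz construction uses only the finiteness of the sets $N_i$, not a uniform lower bound on pairwise distances in $N$.
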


\begin{remark} It is worth mentioning that our argument implies that a
similar result holds for any class $\mathcal{E}$ of embeddings
provided that

\begin{itemize}

\item[(a)] There is a notion of being {\it uniformly in}
$\mathcal{E}$ for a collection of maps of finite metric spaces
into a Banach space.

\item[(b)] The notion in (a) is such that if all finite subspaces
of a metric space $A$ admit uniformly-in-$\mathcal{E}$ embeddings
into a Banach space $X$, then there is an embedding of the class
$\mathcal{E}$ of $A$ into an ultraproduct $X^{\mathcal{U}}$ where
$\mathcal{U}$ is a non-trivial ultrafilter (see the construction
below).

\item[(c)] The image of a locally finite metric space under an
embedding of the class $\mathcal{E}$ is locally finite.

\item[(d)] A composition of an embedding of the class
$\mathcal{E}$ and a bilipschitz embedding is in $\mathcal{E}$.

\end{itemize}
\end{remark}

Theorems \ref{T:bilip} and \ref{T:UnCoarse} generalize previous
results of the same type obtained in
\cite{Bau07,BL08,BG05,Ost06a,Ost06b,Ost09}, see section
\ref{S:previous}.\medskip

Our proof uses (a) The way of pasting embeddings of ``pieces''
suggested in \cite{BL08}. (b) Approaches to selection of basic
subsequences developed in \cite{KP65}. (c) Some basic ultraproduct
techniques going back to \cite{DK72}.

\section{Proof in the bilipschitz case}

\begin{proof}[Proof of Theorem \ref{T:bilip}] We pick a point $O$
in $A$ and let $A_i=\{a\in A:~d_A(O,a)\le 2^i\}$. By the
assumption there are uniformly bilipschitz maps $f_i:A_i\to X$. We
may and shall assume that $f_i(O)=0$ and that there is a constant
$1\le C<\infty$ such that
\[\forall u,v\in A_i\quad d_A(u,v)\le||f_i(u)-f_i(v)||\le Cd_A(u,v).\]

We are going to use some basic facts about ultraproducts of Banach
spaces introduced in \cite{DK72}. We refer to \cite[Chapter
8]{DJT95} for background on this matter.\medskip

Let $\mathcal{U}$ be a nontrivial ultrafilter on $\mathbb{N}$. The
maps $\{f_i\}_{i=1}^\infty$ induce a map $f:A\to X^{\mathcal{U}}$
defined by $f(u)=\{\widetilde f_i(u)\}_{i=1}^\infty$, where
\[\widetilde f_i(u)=\begin{cases} f_i(u) &\hbox{ if } u\in A_i\\
0 & \hbox{ if }u\notin A_i.\end{cases}\]

The definition of an ultraproduct immediately implies that $f:A\to
X^{\mathcal{U}}$ is a bilipschitz embedding. Let $N=f(A)$. Since
the composition of two bilipschitz embeddings is a bilipschitz
embedding, it suffices to find a bilipschitz embedding of $N$
(with the metric induced from $X^\mathcal{U}$) into $X$.
\medskip

\noindent{\bf Note:} This passage from $A$ to its image in
$X^\mathcal{U}$ is not essential for the proof of Theorem
\ref{T:bilip}, it just simplifies some formulas in our proof. A
similar step is more essential for other classes of embeddings.

\begin{remark}\label{R:MAIN} We would like to emphasize that the
rest of the proof of Theorem \ref{T:bilip} consists in
establishing the fact that a locally finite metric subspace of
$X^{\mathcal{U}}$ admits a bilipschitz embedding into $X$.
\end{remark}

\begin{observation}\label{O:SimpleCases} If $X$ is finite-dimensional, then
$X^\mathcal{U}$ is of the same dimension {\rm (see
\cite[Proposition 8.4]{DJT95})}, and the proof if completed.

If $X=L_p(0,1)$ for some $p\in[1,\infty]$, then each separable
subspace of $X^\mathcal{U}$ is isometric to a subspace of $X$
{\rm(see \cite[Theorem 8.7]{DJT95} and references in
\cite[p.~169]{Ost09})}, so the proof is completed in this case,
too.
\end{observation}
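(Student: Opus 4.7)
The plan is to check that, in each of the two listed cases, the reduction effected in Remark~\ref{R:MAIN} (bilipschitz-embedding the locally finite $N\subset X^\mathcal{U}$ into $X$) becomes essentially free, because the ambient space $X^\mathcal{U}$ is structurally as simple as $X$ itself.

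For the finite-dimensional case, I would first record that the canonical map $X\hookrightarrow X^\mathcal{U}$ sending $x$ to the equivalence class of the constant sequence $(x,x,\ldots)$ is linear and isometric. Invoking Proposition~8.4 of \cite{DJT95}, which asserts $\dim X^\mathcal{U}=\dim X$, this isometric injection between two Banach spaces of the same finite dimension must be surjective. Hence $X^\mathcal{U}$ is linearly isometric to $X$, and the inclusion $N\hookrightarrow X^\mathcal{U}$ itself is the required isometric (in particular bilipschitz) embedding of $N$ into $X$.

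For the case $X=L_p(0,1)$, I would exploit that $N$ is countable, being locally finite, so the closed linear span $Y:=\overline{\mathrm{span}}(N)\subseteq X^\mathcal{U}$ is a separable subspace of $X^\mathcal{U}$. Then by the structure theorem for ultraproducts of $L_p$-spaces (Theorem~8.7 of \cite{DJT95}), $X^\mathcal{U}$ is linearly isometric to an $L_p(\Omega,\mu)$ for some measure space; combined with the classical fact that every separable subspace of $L_p(\Omega,\mu)$ is isometric to a subspace of $L_p(0,1)$ (the references cited on p.~169 of \cite{Ost09}), this produces an isometric linear embedding of $Y$, and hence of $N$, into $X$.

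I do not expect any real obstacle here: both sub-cases are assembled from quoted structural results about ultrapowers, and the only thing to verify by hand is that the relevant Banach-space isometry (or linear isomorphism, in the finite-dimensional setting) restricts to a bilipschitz map on the metric subspace $N$, which is automatic since every linear isomorphism between Banach spaces is bilipschitz on every subset. The entire substance of Theorem~\ref{T:bilip} is thus pushed onto the remaining, genuinely non-trivial case where $X$ is infinite-dimensional and not an $L_p$-space.
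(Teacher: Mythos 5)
Your proposal is correct and follows exactly the route the paper intends: the Observation is stated in the paper without proof, merely citing Proposition~8.4 and Theorem~8.7 of \cite{DJT95} (plus the references in \cite{Ost09}), and your argument just spells out how those quoted structural facts (the diagonal isometry $X\hookrightarrow X^\mathcal{U}$ plus equality of finite dimensions, respectively the $L_p$-structure of ultrapowers together with separable universality of $L_p(0,1)$ among $L_p(\mu)$-subspaces) yield the required bilipschitz embedding of $N$ into $X$. The only point worth a glance is the endpoint $p=\infty$, where one uses that the ultrapower is a $C(K)$-type space and that separable spaces embed isometrically into $C[0,1]\subset L_\infty(0,1)$, which is covered by the references the paper cites.
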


In this connection in the rest of the proof we assume that $X$ is
infinite-dimen\-sio\-nal.\medskip

Let $N_i=\{u\in N:~||u||_{X^\mathcal{U}}\le 2^i\}$. It is clear
that $N_i$ are finite sets. Using the same argument as in the
proof of finite representability of $X^\mathcal{U}$ in $X$ (see
\cite[Theorem 8.13]{DJT95}) we get that there exist maps
$s_i:N_i\to X$ such that $s_i(0)=0$ and
\begin{equation}\label{E:*}
\forall u,v\in
N_i\quad||u-v||\le||s_i(u)-s_i(v)||\le\left(1+\frac1i\right)||u-v||.
\end{equation}

Since the sets $N_i$ form an increasing sequence, any subsequence
$\{s_{n_i}\}_{i=1}^\infty$ of $\{s_i\}_{i=1}^\infty$ maps
$\{N_i\}_{i=1}^\infty$ into $X$ and satisfies \eqref{E:*}. We are
going to construct a bilipschitz embedding of $N$ into $X$ using
such subsequences.\medskip

\noindent{\bf Note:} We are going to pass to a subsequence in
$\{s_i\}_{i=1}^\infty$ several times. Each time we keep the
notation $\{s_i\}_{i=1}^\infty$ for the subsequence.\medskip

Recall that a subspace $M\subset X^*$ is called {\it $1$-norming}
if
\[\forall x\in X~\sup\{|f(x)|:~f\in M,~||f||\le 1\}=||x||.\]

It is clear that we may assume that $X$ is separable (replacing it
by the closure of the linear span of $\bigcup_{i=1}^\infty
s_i(N_i)$, if necessary).
\medskip

For a separable Banach space $X$ there exists a separable
$1$-norming subspace $M\subset X^*$. It can be constructed as
follows: Let $\{x_i\}_{i=1}^\infty$ be a dense sequence in the
unit sphere $S_X=\{x\in X:~||x||=1\}$. Let $f_i\in S_{X^*}$ be
such that $f_i(x_i)=1$. It is easy to check that the closed linear
span $M$ of the sequence $\{f_i\}_{i=1}^\infty$ is $1$-norming.
\medskip

Let $M\subset X^*$ be a separable $1$-norming subspace. Then the
natural embedding of $X$ into $M^*$ is an isometry. We identify
$X$ with its image under this embedding. Since $M$ is separable,
there is a subsequence in $\{s_i\}$ such that the sequence
$\{s_i(a)\}_{i=k}^\infty$ is convergent in the weak$^*$ topology
of $M^*$ for each $a\in N_k$. We denote the weak$^*$ limit of this
sequence by $m(a)$.
\medskip

We need to select further subsequences of $\{s_i\}$. We do this in
the following two steps.\medskip

\noindent{\bf Step 1.} If for some $a, b\in N_i$ and some $j\ge i$
the vector $(s_j(a)-s_j(b))-(m(a)-m(b))$ is nonzero, we find and
fix $f=f_{j,a,b}\in S_{M}$ such that
\begin{equation}\label{E:Weak1}
f((s_j(a)-s_j(b))-(m(a)-m(b)))\ge\frac{99}{100}||(s_j(a)-s_j(b))-(m(a)-m(b))||.\end{equation}
In such a case we assume that for all $k>j$ the condition
\begin{equation}\label{E:Weak2}|f((s_k(a)-s_k(b))-(m(a)-m(b)))|\le\frac1{1000}||a-b||
\end{equation}
holds. This goal can be achieved because there are finitely many
$a, b\in N_i$ and because $s_k(a)$ converges to $m(a)$ in the
weak$^*$ topology of $M^*$. \medskip

\noindent{\bf Step 2.} If $m(a)\ne m(b)$ for $a,b\in N_i$, we find
and fix $f=f_{a,b}\in S_M$ such that
\begin{equation}\label{E:mab}
f(m(a)-m(b))\ge\frac{99}{100}||m(a)-m(b)||\end{equation} and
select a subsequence satisfying
\begin{equation}\label{E:Weak3}|f((s_k(a)-s_k(b))-(m(a)-m(b)))|\le\frac1{100}||m(a)-m(b)||\end{equation}
for $k\ge i$. This can be achieved because $N_i$ is finite and
$s_k(a)$ converges to $m(a)$ in the weak$^*$ topology of
$M^*$.\medskip

We introduce a map $\varphi:A\to X$ by
\[\varphi(a)=\frac{2^i-||a||}{2^{i-1}}s_i(a)+
\frac{||a||-2^{i-1}}{2^{i-1}}s_{i+1}(a)\] if $2^{i-1}\le||a||\le
2^i$. One can easily check that the map is well-defined for
$||a||=2^i$.
\medskip

We start by considering the case where $X$ is isomorphic to its
hyperplane, and therefore $X$ is isomorphic to
$X\oplus\mathbb{R}$. In this case we show that the embedding
$\widetilde\varphi:A\to X\oplus\mathbb{R}$ given by
$\widetilde\varphi(a)=(\varphi(a),||a||)$ is a bilipschitz
embedding.\medskip

The proof in the case when $X$ is not isomorphic to its hyperplane
is completed in section \ref{S:WithoutLine}. (We know, by results
of \cite{GM93}, that spaces which are not isomorphic to their
hyperplanes exist.)
\medskip

Now we estimate the Lipschitz constants of $\widetilde\varphi$ and
$(\widetilde\varphi)^{-1}$. We consider three cases.

\subsection{Case 1: $2^{i-1}\le||b||\le||a||\le 2^i$}

In this case we have
\begin{equation}\label{E:case1}\begin{split}
\varphi (a)-\varphi(b)&=\frac{2^i-||a||}{2^{i-1}}s_i(a)+
\frac{||a||-2^{i-1}}{2^{i-1}}s_{i+1}(a)\\&\qquad-\frac{2^i-||b||}{2^{i-1}}s_i(b)-
\frac{||b||-2^{i-1}}{2^{i-1}}s_{i+1}(b)\\&=
\frac{2^i-||a||}{2^{i-1}}(s_i(a)-s_i(b))+\frac{||a||-2^{i-1}}{2^{i-1}}(s_{i+1}(a)-s_{i+1}(b))\\&
\qquad+\frac{||b||-||a||}{2^{i-1}}s_i(b)+
\frac{||a||-||b||}{2^{i-1}}s_{i+1}(b).
\end{split}\end{equation}

Using \eqref{E:*} we get
\[\begin{split}||\varphi (a)-\varphi(b)||&\le
\frac{2^i-||a||}{2^{i-1}}\,2||a-b||+\frac{||a||-2^{i-1}}{2^{i-1}}\,2||a-b||\\&
\qquad+4|\,||b||-||a||\,|+ 4|\,||a||-||b||\,|,\end{split}\] and
the fact that $\widetilde\varphi$ is a Lipschitz map is
immediate.\medskip

Now we estimate the Lipschitz constant of
$(\widetilde\varphi)^{-1}$. So we need to estimate
$||\varphi(a)-\varphi(b)||$ from below. Observe that

\begin{equation}\label{E:repres}\begin{split}
\varphi (a)-\varphi(b)&=m(a)-m(b)\\
&\quad+
\frac{2^i-||a||}{2^{i-1}}(s_i(a)-s_i(b)-(m(a)-m(b)))\\
&\quad+\frac{||a||-2^{i-1}}{2^{i-1}}(s_{i+1}(a)-s_{i+1}(b)-(m(a)-m(b)))\\&
\quad+(||b||-||a||)\cdot\frac{s_i(b)}{2^{i-1}}+
(||a||-||b||)\frac{s_{i+1}(b)}{2^{i-1}}.
\end{split}\end{equation}

First we consider the case when
$||m(a)-m(b)||\ge\frac1{100}||a-b||$. Let $f_{a,b}$ be the
corresponding functional (see Step 2 above). We have

\begin{equation}\label{E:below1}\begin{split}
||\varphi(a)-\varphi(b)||&\ge f_{a,b}(\varphi(a)-\varphi(b))\\
&= f_{a,b}(m(a)-m(b))\\
&\quad+f_{a,b}\left(
\frac{2^i-||a||}{2^{i-1}}(s_i(a)-s_i(b)-(m(a)-m(b)))\right)\\
&\quad+f_{a,b}\left(\frac{||a||-2^{i-1}}{2^{i-1}}(s_{i+1}(a)-s_{i+1}(b)-(m(a)-m(b)))\right)\\&
\quad+(||b||-||a||)\cdot\frac{f_{a,b}(s_i(b))}{2^{i-1}}+
(||a||-||b||)\frac{f_{a,b}(s_{i+1}(b))}{2^{i-1}}\\
&\stackrel{\eqref{E:mab}~\&~\eqref{E:Weak3}}{\ge}\frac{99}{100}||m(a)-m(b)||-\frac1{100}||m(a)-m(b)||\\
&\qquad-4|\,||b||-||a||\,|- 4|\,||a||-||b||\,|\\
&\ge\frac{98}{10000}||a-b||-8(||a||-||b||).
\end{split}\end{equation}

In the case when $||a||-||b||<\frac1{1000}||a-b||$, we get an
estimate for the $\lip(\varphi^{-1})$ (and thus for
$\lip((\widetilde\varphi)^{-1})$\,) from above.
\medskip

The estimate for $\lip((\widetilde\varphi)^{-1})$ in the case when
$||a||-||b||\ge\frac1{1000}||a-b||$ is immediate, we just recall
that
\begin{equation}\label{E:estTilde}
\widetilde\varphi(a)-\widetilde\varphi(b)=(\varphi(a)-\varphi(b))\oplus(||a||-||b||).\end{equation}

Hence, to finish the estimate for $\lip((\widetilde\varphi)^{-1})$
in Case 1 it remains to consider the case when
$||m(a)-m(b)||<\frac1{100}||a-b||$. In this case we consider two
subcases:

\begin{equation}\label{E:subcase1}
\frac{2^i-||a||}{2^{i-1}}||s_i(a)-s_i(b)-(m(a)-m(b))||\ge\frac1{10}||a-b||
\end{equation}

\begin{equation}\label{E:subcase2}
\frac{2^i-||a||}{2^{i-1}}||s_i(a)-s_i(b)-(m(a)-m(b))||<\frac1{10}||a-b||
\end{equation}

We start with subcase \eqref{E:subcase1}. Let $f_{i,a,b}$ be the
functional found in Step 1. We get

\begin{equation}\label{E:below2}\begin{split}
||\varphi(a)-\varphi(b)||&\ge f_{i,a,b}(\varphi(a)-\varphi(b))\\
&\ge f_{i,a,b}(m(a)-m(b))\\
&\quad+f_{i,a,b}\left(
\frac{2^i-||a||}{2^{i-1}}(s_i(a)-s_i(b)-(m(a)-m(b)))\right)\\
&\quad+f_{i,a,b}\left(\frac{||a||-2^{i-1}}{2^{i-1}}(s_{i+1}(a)-s_{i+1}(b)-(m(a)-m(b)))\right)\\&
\quad+(||b||-||a||)\cdot\frac{f_{i,a,b}(s_i(b))}{2^{i-1}}+
(||a||-||b||)\frac{f_{i,a,b}(s_{i+1}(b))}{2^{i-1}}\\
&\stackrel{\eqref{E:Weak1}, \eqref{E:Weak2},~\&~\eqref{E:subcase1}}{>}\frac{99}{1000}||a-b||-\frac1{1000}||a-b||-\frac1{100}||a-b||\\
&\qquad\qquad-4|\,||b||-||a||\,|- 4|\,||a||-||b||\,|\\
&=\frac{88}{1000}||a-b||-8(||a||-||b||).
\end{split}\end{equation}

Now, as above, we consider the case when
$||a||-||b||<\frac1{1000}||a-b||$ separately, and complete the
argument in the same way as above.
\medskip

We turn to the subcase \eqref{E:subcase2}. Recall (see
\eqref{E:*}) that $||s_i(a)-s_i(b)||\ge ||a-b||$. Combining this
with \eqref{E:subcase2} and with the inequality
$||m(a)-m(b)||<\frac1{100}||a-b||$, we get
$||s_{i}(a)-s_{i}(b)-(m(a)-m(b))||\ge\frac{99}{100}||a-b||$ and
$\frac{2^i-||a||}{2^{i-1}}<\frac{10}{99}$. (In the same way we get
the inequality
$||s_{i+1}(a)-s_{i+1}(b)-(m(a)-m(b))||\ge\frac{99}{100}||a-b||$
which we use below.) Therefore
$\frac{||a||-2^{i-1}}{2^{i-1}}>\frac{89}{99}$. Applying the
triangle inequality we get
\begin{equation}\label{E:below3}\begin{split}
||\varphi
(a)-\varphi(b)||&\ge\left\|\frac{||a||-2^{i-1}}{2^{i-1}}(s_{i+1}(a)-s_{i+1}(b)-(m(a)-m(b)))\right\|
\\&\quad-
\left\|\frac{2^i-||a||}{2^{i-1}}(s_i(a)-s_i(b)-(m(a)-m(b)))\right\|\\
&\quad-||m(a)-m(b)||-8(||a||-||b||)\\
&>\frac{89}{99}\cdot\frac{99}{100}\,||a-b||-\frac1{10}||a-b||-\frac1{100}||a-b||-8(||a||-||b||)\\
&=\frac{78}{100}||a-b||-8(||a||-||b||).
\end{split}\end{equation}

Now, as it was done already twice, we consider the case when
$||a||-||b||<\frac1{100}||a-b||$ separately, and complete the
argument in the same way as above. This completes the argument in
Case 1.

\subsection{Case 2: $2^{i-1}\le||b||\le2^i\le||a||\le2^{i+1}$}

We have \[\label{E:case2}\begin{split} \varphi(a)-\varphi(b)
&=-\frac{2^i-||b||}{2^{i-1}}s_i(b)\\
&\quad+\frac{2^{i+1}-||a||}{2^{i}}s_{i+1}(a)-\frac{||b||-2^{i-1}}{2^{i-1}}s_{i+1}(b)\\
&\quad+\frac{||a||-2^{i}}{2^{i}}s_{i+2}(a)
\end{split}\]

{\bf Estimate from above:} The first and the last terms have norms
$\le4(||a||-||b||)$. The norm of the two remaining terms can be
estimated as follows:
\begin{equation}\label{E:Case2middle}\begin{split}
&\left\|\frac{2^{i+1}-||a||}{2^{i}}s_{i+1}(a)-\frac{||b||-2^{i-1}}{2^{i-1}}s_{i+1}(b)\right\|
\\&\qquad=\left\|\frac{2^i-(||a||-2^i)}{2^i}s_{i+1}(a)+
\frac{(2^i-||b||)-2^{i-1}}{2^{i-1}}s_{i+1}(b) \right\|\\
&\qquad=\left\|(s_{i+1}(a)-s_{i+1}(b))-\frac{(||a||-2^i)}{2^i}s_{i+1}(a)+\frac{(2^i-||b||)}{2^{i-1}}s_{i+1}(b)\right\|
\\ &\qquad\le||a-b||+4(||a||-2^i)+4(2^i-||b||)\le
5||a-b||.
\end{split}\end{equation}
\medskip

{\bf Now we turn to estimates from below.} Rewriting and
estimating some of the terms as in \eqref{E:Case2middle} we get
\begin{equation}\label{E:belowCase2}\begin{split}
||&\varphi(a)-\varphi(b)||\\
&\ge\left\|(s_{i+1}(a)-s_{i+1}(b))-\frac{(||a||-2^i)}{2^i}s_{i+1}(a)+\frac{(2^i-||b||)}{2^{i-1}}s_{i+1}(b)\right\|\\
&-\frac{2^i-||b||}{2^{i-1}}||s_i(b)||-\frac{||a||-2^{i}}{2^{i}}||s_{i+2}(a)||\\
&\ge||s_{i+1}(a)-s_{i+1}(b)||-12(||a||-||b||)\ge||a-b||-12(||a||-||b||),
\end{split}\end{equation}
where in the last line we used \eqref{E:*}. We complete the proof
in this case as three times before. If
$||a||-||b||<\frac1{20}||a-b||$, we get an estimate from
\eqref{E:belowCase2}. Otherwise we use \eqref{E:estTilde}.

\subsection{Case 3: $2^{i-1}\le||b||\le 2^i<2^{k-1}\le||a||\le 2^k$}

In this case we have
\[3(2^k+2^i)\ge3(||a||+||b||)\ge||\widetilde\varphi(a)-\widetilde\varphi(b)||\ge||a||-||b||\ge
2^{k-1}-2^i.\]
Since
\[\frac{3(2^k+2^i)}{2^{k-1}-2^i}\le\frac{3\cdot 2^{k+1}}{2^{k-2}}\le 24\]
and
\[||a||+||b||\ge||a-b||\ge||a||-||b||,\]
it follows that $\widetilde\varphi$ is bilipschitz.

\subsection{Completion of the proof for spaces non-isomorphic to their hyperplanes}\label{S:WithoutLine}

We have proved Theorem \ref{T:bilip} in the case when $X$ is
isomorphic to its hyperplane. To prove Theorem \ref{T:bilip} in
the general case we find a Lipschitz map $\tau:\mathbb{R}_+\to X$
such that the map $\widehat\varphi:N\to X$ given by
$\widehat\varphi(a)=\tau(||a||)+\varphi(a)$ works just in the same
way as $\widetilde\varphi$. It is easy to see that for this to be
true we need the inequality
\[||\widehat\varphi(a)-\widehat\varphi(b)||\ge\alpha(||a||-||b||)\]
to hold for some $\alpha>0$. We rewrite this inequality as
\begin{equation}\label{E:Need}||\tau(||a||)-\tau(||b||)+(\varphi(a)-\varphi(b))||\ge\alpha(||a||-||b||)
\end{equation}
Let $T_i=\{\varphi(u):~u\in N,~||u||\le 3^{i+1}\}$. It is clear
that all these sets are finite. We construct inductively a
sequence $\{F_i\}_{i=1}^\infty$ of finite-dimensional subspaces of
$X$ and a sequence $\{p_i\}_{i=1}^\infty$ of vectors. We let
$F_1=\lin(T_1)$. Since $X$ is infinite-dimensional (see the
assumption made after Observation \ref{O:SimpleCases}), there is
$p_1\in S_X$ such that $\dist(p_1,F_1)=1$. Let
$F_2=\lin(T_2\cup\{p_1\})$ and $p_2\in S_X$ be such that
$\dist(p_2,F_2)=1$. Let $F_3=\lin(T_3\cup\{p_i\}_{i=1}^2)$, we
continue in an obvious way.\medskip

We introduce the map $\tau:\mathbb{R}_+\to X$ in the following
way:
\[\tau(t)=\begin{cases} tp_1 & \hbox{ if } 0\le t\le 3^1\\
3^1p_1+(t-3^1)p_2 & \hbox{ if } 3^1\le t\le 3^2\\
3^1p_1+(3^2-3^1)p_2+(t-3^2)p_3 & \hbox{ if } 3^2\le t\le 3^3\\
\dots & \dots\\
3^1p_1+(3^2-3^1)p_2\dots+(3^k-3^{k-1})p_k+(t-3^k)p_{k+1}& \hbox{ if }3^k\le t\le 3^{k+1}\\
\dots& \dots\\
\end{cases}
\]

Since $||p_i||=1$, the map $\tau$ is $1$-Lipschitz. It remains to
show that the inequality \eqref{E:Need} holds. We consider three
cases:

\begin{enumerate}

\item $3^i\le||b||\le ||a||\le 3^{i+1}$. The argument used in this
case can be used also in the case $0\le||b||\le||a||\le 3^1$.
Minor adjustments of the other cases are needed if $0\le||b||\le
3^1\le||a||$.

\item $3^{i-1}\le ||b||\le 3^i\le ||a||\le 3^{i+1}$

\item $3^{k-1}\le ||b||\le 3^k\le 3^i\le ||a||\le 3^{i+1}$, where
$k<i$.

\end{enumerate}

In the first case we have
\[
||\tau(||a||)-\tau(||b||)+(\varphi(a)-\varphi(b))||=
\left\|(||a||-||b||)p_{i+1}+(\varphi(a)-\varphi(b))\right\|\ge
||a||-||b||.
\]
The last inequality follows from
$\dist(p_{i+1},F_{i+1})=||p_{i+1}||$ and $\varphi(a),\varphi(b)\in
T_i$, therefore $\varphi(a)-\varphi(b)\in F_i\subset F_{i+1}$.

In the second case we consider two subcases:

\begin{equation}\label{E:Sub1}
||a||-3^i\ge\frac13(||a||-||b||).
\end{equation}

\begin{equation}\label{E:Sub2}
||a||-3^i<\frac13(||a||-||b||).
\end{equation}

In subcase \eqref{E:Sub1} we get
\[\begin{split}
||\tau(||a||)-\tau(||b||)&+(\varphi(a)-\varphi(b))||\\
&=
\left\|(||a||-3^i)p_{i+1}+(3^i-||b||)p_i+(\varphi(a)-\varphi(b))\right\|\\
&\ge ||a||-3^i\ge \frac13(||a||-||b||),\end{split}
\]
where we use \eqref{E:Sub1}, $\dist(p_{i+1},F_{i+1})=||p_{i+1}||$,
and $p_i,\varphi(a),\varphi(b)\in F_{i+1}$.
\medskip

In subcase \eqref{E:Sub2} we have

\[\begin{split}
||\tau(||a||)-\tau(||b||)&+(\varphi(a)-\varphi(b))||\\
&=
\left\|(||a||-3^i)p_{i+1}+(3^i-||b||)p_i+(\varphi(a)-\varphi(b))\right\|\\
&\ge
\left\|(3^i-||b||)p_i+(\varphi(a)-\varphi(b))\right\|-(||a||-3^i)\\
&\ge (3^i-||b||)-(||a||-3^i)\ge \frac13(||a||-||b||),\end{split}
\]
In this chain of inequalities we use the fact that
$\varphi(a),\varphi(b)\in T_i$, $\dist(p_i,F_i)=||p_i||$, in the
last line we use the inequality $||b||\le 3^i\le ||a||$ and
\eqref{E:Sub2}.
\medskip

Now we consider the third case. In this case we again consider
subcases \eqref{E:Sub1} and \eqref{E:Sub2}. In the first subcase
the argument is exactly as above. So we focus on the second
subcase. In this subcase we have
\[||a||-3^i<\frac13(3^i-3^{i-1}),\]
see the defining inequality for the third case.
\medskip

In this subcase we have
\[\tau(||a||)-\tau(||b||)=(||a||-3^i)p_{i+1}+(3^i-3^{i-1})p_i+r,\]
where $r$ is a vector contained in $F_i$. Thus

\[\begin{split}
||\tau(||a||)-\tau(||b||)&+(\varphi(a)-\varphi(b))||\\
&\ge
\left\|(3^i-3^{i-1})p_i+r+(\varphi(a)-\varphi(b))\right\|-||(||a||-3^i)p_{i+1}||\\
&\ge (3^i-3^{i-1})-\frac13\,(3^i-3^{i-1})=\frac43\cdot
3^{i-1}\ge\frac4{27}(||a||-||b||),\end{split}
\]
where we use the fact that $r$ and $\varphi(a)-\varphi(b)$ are in
$F_i$ and $\dist(p_i,F_i)=||p_i||$. This completes the proof of
\eqref{E:Need} and thus Theorem \ref{T:bilip}.
\end{proof}

\section{Proof in the coarse case}

Proof of Theorem \ref{T:bilip} contains almost everything we need
for the proof of Theorem \ref{T:UnCoarse}, we need just to modify
the beginning of the proof.

\begin{proof}[Proof of Theorem \ref{T:UnCoarse}] We pick a point $O$
in $A$ and let $A_i=\{a\in A:~d_A(O,a)\le 2^i\}$. By the
assumption there are uniformly coarse maps $f_i:A_i\to X$. We may
and shall assume that $f_i(O)=0$. Let $\mathcal{U}$ be a
nontrivial ultrafilter on $\mathbb{N}$. The maps
$\{f_i\}_{i=1}^\infty$ induce a map $f:A\to X^{\mathcal{U}}$
defined by $f(u)=\{\widetilde f_i(u)\}_{i=1}^\infty$, where
\[\widetilde f_i(u)=\begin{cases} f_i(u) &\hbox{ if } u\in A_i\\
0 & \hbox{ if }u\notin A_i.\end{cases}\]

The definition of an ultraproduct immediately implies that $f:A\to
X^{\mathcal{U}}$ is a coarse embedding. Let $N=f(A)$, it is easy
to check that $N$ with the metric induced from $X^\mathcal{U}$ is
a locally finite metric space. The argument of the proof of
Theorem \ref{T:bilip} shows that there is a bilipschitz embedding
of $N$ into $X$ (see Remark \ref{R:MAIN}). Since the composition
of a coarse and a bilipschitz embeddings is a coarse embedding,
the proof is completed.\end{proof}

\section{Relations with previous results}\label{S:previous}

The main result of the paper \cite{BL08} is

\begin{theorem}\label{T:BL08} If $X$ is a Banach space without cotype, then every
locally finite metric space admits a bilipschitz embedding into
$X$.
\end{theorem}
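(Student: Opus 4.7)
The plan is to obtain Theorem~\ref{T:BL08} as an essentially immediate corollary of Theorem~\ref{T:bilip}. By Theorem~\ref{T:bilip}, it suffices to show that the finite subsets of any locally finite metric space $A$ admit uniformly bilipschitz embeddings into $X$; in fact I will show something stronger, namely that every finite metric space embeds into $X$ with distortion at most $1+\ep$ for any prescribed $\ep>0$.

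The argument rests on two classical ingredients. First, by the Maurey--Pisier theorem, a Banach space $X$ has no (finite) cotype if and only if $\ell_\infty^n$ is finitely representable in $X$: for every $n\in\mathbb{N}$ and every $\ep>0$ there is a linear map $T_n:\ell_\infty^n\to X$ satisfying $\|x\|\le\|T_n x\|\le(1+\ep)\|x\|$ for all $x\in\ell_\infty^n$. Second, the Fr\'echet embedding of an $n$-point metric space $(F,d_F)$ with $F=\{x_1,\dots,x_n\}$, given by $x\mapsto(d_F(x,x_i))_{i=1}^n$, is an isometric embedding into $\ell_\infty^n$. Composing the Fr\'echet embedding of $F$ with $T_n$ yields a $(1+\ep)$-bilipschitz embedding of $F$ into $X$.

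Applying this with any fixed $\ep>0$ (say $\ep=1$), one obtains a uniform bound on the distortions of embeddings of the finite subsets of a locally finite $A$ into $X$. Theorem~\ref{T:bilip} then delivers a bilipschitz embedding of $A$ itself into $X$, proving Theorem~\ref{T:BL08}. There is no real obstacle: once Theorem~\ref{T:bilip} is in hand, the remaining content is just the classical finite representability of $\ell_\infty^n$ in cotype-less spaces combined with the Fr\'echet embedding of finite metric spaces into $\ell_\infty$.
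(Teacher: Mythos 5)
Your proposal is correct and is essentially the paper's own argument: the paper likewise deduces Theorem~\ref{T:BL08} from Theorem~\ref{T:bilip} together with the Fr\'echet isometric embedding of $n$-point metric spaces into $\ell_\infty^n$ and the Maurey--Pisier result that the spaces $\ell_\infty^n$ embed uniformly bilipschitz (indeed $(1+\varepsilon)$-isomorphically) into any Banach space without cotype. No differences worth noting.
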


Theorem \ref{T:BL08} is an immediate consequence of Theorem
\ref{T:bilip} and the following results (used in \cite{BL08}):

\begin{itemize}

\item \cite[p.~161]{Fre10} (see also \cite[p.~385]{Mat02}): Each
$n$-element metric space is isometric to a subset of
$\ell_\infty^n$.

\item \cite{MP76}: The spaces $\{\ell_\infty^n\}_{n=1}^\infty$
admit uniformly bilipschitz embeddings into any Banach space $X$
without cotype.

\end{itemize}

The fact that Theorem \ref{T:BL08} implies the following result of
\cite{BG05} was observed already in \cite{BL08}.

\begin{theorem}[\cite{BG05}] Let $A$ be a metric space with bounded geometry. There exists a
sequence of positive real numbers $\{p_n\}$ and a coarse embedding
of $A$ into the $\ell_2$-direct sum of
$\{\ell_{p_n}\}_{n=1}^\infty$.
\end{theorem}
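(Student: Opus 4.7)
The plan is to deduce the theorem immediately from Theorem \ref{T:BL08}, exactly as the preceding sentence in the excerpt suggests. I need to (a) choose the exponents $p_n$ so that the target space $Y:=\bigl(\bigoplus_{n=1}^\infty \ell_{p_n}\bigr)_{\ell_2}$ is a Banach space without cotype, and (b) observe that a metric space with bounded geometry is locally finite, so that Theorem \ref{T:BL08} supplies a bilipschitz, hence coarse, embedding $A\hookrightarrow Y$.

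For (a) I would take any sequence $p_n\to\infty$, for instance $p_n=n$. Each $\ell_{p_n}$ sits isometrically in $Y$ as a coordinate summand. A standard finite-dimensional fact is that for every fixed $k$ and every $\varepsilon>0$ the space $\ell_\infty^k$ is $(1+\varepsilon)$-isomorphic to $\ell_p^k$ once $p\ge p(k,\varepsilon)$ is large enough. Consequently $Y$ contains, with arbitrarily small distortion, isomorphic copies of $\ell_\infty^k$ for every $k$. Since the cotype-$q$ constant of $\ell_\infty^k$ grows like $k^{1/q}$, no finite $q$ can serve as a cotype for $Y$, so $Y$ has no cotype.

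For (b) the definition of bounded geometry gives a uniform bound $N(r)$ on the cardinality of every closed ball of radius $r$, together with uniform discreteness; in particular every closed ball in $A$ is finite, which is precisely the locally finite hypothesis in Theorem \ref{T:BL08}. Applying that theorem produces a bilipschitz embedding $\Phi:A\to Y$, and the two-sided estimate \eqref{E:MapDist} is a special case of the coarse-embedding condition \eqref{E:coarse} with $\rho_1(t)=rt$ and $\rho_2(t)=rCt$, so $\Phi$ is also a coarse embedding.

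The only step requiring any thought is the verification that $Y$ has no cotype; everything else is bookkeeping. In particular none of the delicate ultraproduct and weak$^*$-selection machinery from the proof of Theorem \ref{T:bilip} is invoked directly here, since it is already packaged inside the statement of Theorem \ref{T:BL08}.
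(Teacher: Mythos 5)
Your derivation is correct and is exactly the route the paper has in mind: it records (following \cite{BL08}) that the Brown--Guentner theorem follows from Theorem \ref{T:BL08} by taking $p_n\to\infty$, since the $\ell_2$-sum of the $\ell_{p_n}$ then contains the spaces $\ell_\infty^k$ with uniformly bounded distortion and hence has no cotype, while a bounded geometry space is locally finite and a bilipschitz embedding is in particular coarse. The only caveat is that if your definition of bounded geometry does not already build in uniform discreteness, you should first replace $A$ by a $1$-separated net, which is a coarse equivalence and changes nothing.
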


In \cite{Ost09} the following result was proved

\begin{theorem} Let $A$ be a locally finite metric
space which admits a bilipschitz embedding into a Hilbert space,
and let $X$ be an infinite-dimensional Banach space. Then there
exists a bilipschitz embedding $f:A\to X$.
\end{theorem}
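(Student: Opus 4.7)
The plan is to reduce the statement to Theorem~\ref{T:bilip}: once I verify that the finite subsets of $A$ admit uniformly bilipschitz embeddings into $X$, that theorem delivers a bilipschitz embedding of $A$ itself.

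First I would fix a bilipschitz embedding $g\colon A\to H$ into some Hilbert space $H$, with distortion $D$. Then every finite subset $F\subset A$ sits, via $g$, inside a finite-dimensional subspace of $H$, so $g(F)$ is isometric to a subset of $\ell_2^{n(F)}$ for some $n(F)$ depending on $|F|$. The substantive step is to invoke Dvoretzky's theorem, which guarantees that every infinite-dimensional Banach space $X$ contains, for each $n\in\mathbb{N}$ and each $\varepsilon>0$, a subspace $(1+\varepsilon)$-isomorphic to $\ell_2^n$. Fixing some $\varepsilon>0$ once and for all, this yields, for each finite $F\subset A$, a linear embedding of the span of $g(F)$ into $X$ of distortion at most $1+\varepsilon$. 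Composing with $g$ produces an embedding $F\to X$ whose distortion is at most $D(1+\varepsilon)$, uniformly in $F$.

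Applying Theorem~\ref{T:bilip} to this uniformly bilipschitz family of embeddings of finite subsets of $A$ gives the desired bilipschitz embedding of $A$ into $X$. I do not foresee any real obstacle: the deep content of the argument, namely the passage from ``uniformly embeddable finite pieces'' to ``embedding of the whole locally finite space,'' is already carried out in Theorem~\ref{T:bilip}, and the only other ingredient is Dvoretzky's theorem, whose hypothesis is exactly the infinite-dimensionality of $X$ assumed in the statement.
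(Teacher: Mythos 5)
Your proposal is correct and is exactly the argument the paper intends: the paper derives this theorem "immediately" from Theorem \ref{T:bilip} together with Dvoretzky's theorem, precisely by noting that finite subsets of $A$ land (via the Hilbert-space embedding) in copies of $\ell_2^n$, which embed with uniformly bounded distortion into any infinite-dimensional $X$. Nothing is missing.
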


In \cite{Ost06b} a coarse version of this result was proved. These
results follow immediately from Theorems \ref{T:bilip} and
\ref{T:UnCoarse} and the Dvoretzky theorem \cite{Dvo61}.
\medskip

Theorem \ref{T:bilip} can also be used to derive both of the main
results of \cite{Bau07} from the mentioned in \cite{Bau07} finite
versions of the results.

\end{large}

\begin{small}

\renewcommand{\refname}{\section{References}}

\end{small}

\noindent{\sc Department of Mathematics and Computer Science\\
St. John's University\\ 8000 Utopia Parkway, Queens, NY 11439,
USA}\\
e-mail: {\tt ostrovsm@stjohns.edu}

\end{document}